
\documentclass[11pt, reqno]{amsart}

\usepackage[utf8]{inputenc}
\usepackage[T1]{fontenc}
\usepackage[english]{babel}
\usepackage{amsmath, amsthm}
\usepackage{ae}
\usepackage{icomma}
\usepackage{units}
\usepackage{color}
\usepackage{graphicx}
\usepackage{bbm}
\usepackage{caption}
\usepackage{array}
\usepackage[hmarginratio=1:1]{geometry}
\usepackage[hyphens]{url}
\usepackage[pdfpagelabels=false, hidelinks]{hyperref}
\usepackage{mathrsfs}
\usepackage{amssymb}
\usepackage{placeins} 
\usepackage{tikz}
\usetikzlibrary{patterns}
\usepackage{float} 
\usepackage[nodayofweek]{datetime}
\usepackage{enumitem}
\usepackage{mathtools}
\usepackage[numbers, sort]{natbib}
\usepackage{dsfont}
\usepackage{comment}
\usepackage{ifthen}

\newcommand{\R}{\ensuremath{\mathbb{R}}}

\DeclareMathOperator{\dist}{\textnormal{dist}}

\let\eps\varepsilon

\let\phi\varphi

\DeclareMathOperator{\sgn}{sgn}

\newcommand{\pps}{\hspace{0.75pt}}

\newtheorem{theorem}{Theorem}[section]

\newtheorem{lemma}[theorem]{Lemma}

\newtheorem{corollary}[theorem]{Corollary}

\numberwithin{theorem}{section}
\numberwithin{definition}{section}
\theoremstyle{remark}
\newtheorem{remark}[theorem]{Remark}

\pagenumbering{arabic}


\begin{document}

\title[A sharp Hermite--Hadamard inequality]{A sharp multidimensional Hermite--Hadamard inequality}

\author[S. Larson]{Simon Larson}
\address{\textnormal{(S. Larson)} Department of Mathematics, Caltech, Pasadena, CA 91125, USA}
\email{larson@caltech.edu}

\thanks{\copyright\, 2020 by the author. This paper may be
reproduced, in its entirety, for non-commercial purposes.\\ Knut and Alice Wallenberg Foundation grant KAW~2018.0281 is acknowledged.}

\keywords{Hermite--Hadamard inequality, subharmonic functions, Saint Venant elasticity theory, torsion, gradient bounds, convex geometry.}
\subjclass[2010]{28A75, 31A05, 31B05, 35B50, 74B05.}

\begin{abstract} 
    Let $\Omega \subset \R^d$, $d \geq 2$, be a bounded convex domain and $f\colon \Omega \to \R$ be a non-negative subharmonic function. In this paper we prove the inequality
    \begin{equation*}
       \frac{1}{|\Omega|}\int_\Omega f(x)\,dx \leq \frac{d}{|\partial\Omega|}\int_{\partial\Omega} f(x)\,d\sigma(x)\,.
     \end{equation*} 
     Equivalently, the result can be stated as a bound for the gradient of the Saint Venant torsion function. Specifically, if $\Omega \subset \R^d$ is a bounded convex domain and $u$ is the solution of $-\Delta u =1$ with homogeneous Dirichlet boundary conditions, then
     \begin{equation*}
       \|\nabla u\|_{L^\infty(\Omega)} < d\frac{|\Omega|}{|\partial\Omega|}\,.
     \end{equation*}
     Moreover, both inequalities are sharp in the sense that if the constant $d$ is replaced by something smaller there exist convex domains for which the inequalities fail. This improves upon the recent result that the optimal constant is bounded from above by~$d^{3/2}$ due to Beck et al.~\cite{beck_improved_2019}.
\end{abstract}

\maketitle

\section{Introduction and main results}

\subsection{Introduction} In this note we make several observations concerning multidimensional Hermite--Hadamard inequalities. These inequalities have been the subject of study in several recent articles, we refer to~\cite{beck_improved_2019,pasteczka_jensen-type_2018,steinerberger_hermite-hadamard_2018,lu_dimension-free_2019,hoskins_towards_2019}. In particular, we are interested in $c_d(\Omega)$ the optimal constant in the inequality
\begin{equation}\label{eq:HermiteHadamard}
  \frac{1}{|\Omega|}\int_{\Omega}f(x)\,dx \leq \frac{c_d(\Omega)}{|\partial\Omega|}\int_{\partial\Omega}f(x)\,d\sigma(x)\,,
\end{equation}
where $\Omega\subset \R^d$ is convex and bounded, and $f\colon \Omega \to \R$ is non-negative and subharmonic. If $\Omega$ is a ball $B$ the inequality with constant $c_d(B)=1$ is an easy consequence of the maximum and mean value principles, and~\eqref{eq:HermiteHadamard} can in a sense be regarded as a rigidity statement thereof. The case $d=1$ is the classical Hermite--Hadamard inequality~\cite{J1893,MR1505341}. Since the one-dimensional case is completely understood we assume throughout that $d\geq 2$.

Our main result concerns the smallest constant $c_d$ such that $c_d(\Omega)\leq c_d$ for all convex domains $\Omega \subset \R^d$,
\begin{equation}\label{eq:uniform constant}
  c_d=\sup\{ c_d(\Omega): \Omega\subset \R^d, \mbox{ convex and bounded}\}\,.
 \end{equation} 
 Specifically, we prove that $c_d=d$ and the supremum is not attained. In other words, for any convex $\Omega\subset \R^d$, $d\geq 2$, the strict inequality $c_d(\Omega)<d$ is valid. Moreover, in the two-dimensional case a result of M\'endez-Hern\'andez~\cite{mendez-hern_andez_brascamp-lieb-luttinger_2002} allows us to quantify this gap by showing that for any convex $\Omega \subset \R^2$
 \begin{equation}\label{eq: two-dimensional quantitative bound}
   c_2(\Omega) < 2\sqrt{1-c e^{-\frac{\pi}{2}\frac{D(\Omega)-r(\Omega)}{r(\Omega)}}}\,,
 \end{equation}
 where $D(\Omega)$ and $r(\Omega)$ denote the diameter and inradius of $\Omega$, respectively, and $c$ is a positive constant. In particular, we see that for $c_2(\Omega)$ to be close to $c_2=2$ the eccentricity of $\Omega$, i.e.\ $D(\Omega)/r(\Omega)$, needs to be sufficiently large. We suspect that similar estimates are valid also when $d\geq 3$.

\subsection{Main results}

In~\cite{steinerberger_hermite-hadamard_2018} Steinerberger proved that~\eqref{eq:HermiteHadamard} is valid for any convex $\Omega \subset \R^d$ and all non-negative convex functions $f$ with a constant $c_d(\Omega)\leq 2\pi^{-1/2}d^{d+1}$ (note that all convex functions are subharmonic). More recently it was proven by Beck et al.~\cite{beck_improved_2019} that the constant $c_d$ in~\eqref{eq:uniform constant} satisfies
\begin{equation}\label{eq: bounds by Beck et al.}
   d-1 \leq c_d \leq \begin{cases}
     d^{3/2} & \mbox{if } d \mbox{ is odd}\,,\\
     \frac{d(d+1)}{\sqrt{d+2}} & \mbox{if }d \mbox{ is even}\,.
   \end{cases}
 \end{equation} 
Our main result is that up to a slight change the proof of the upper bound in~\cite{beck_improved_2019} in fact yields that $c_d\leq d$. The new ingredient crucial for this improvement is an isoperimetric-type inequality for solutions of the heat equation on convex domains of fixed inradius proved by Ba\~nuelos and Kr\"oger~\cite{BanuelosKroger}. Furthermore, tracking equality cases throughout the proof enables us to construct a sequence of convex domains for which the inequality is asymptotically tight.
\begin{theorem}\label{thm:HermiteHadamard}
  Let $\Omega \subset \R^d$, $d\geq 2$, be a bounded convex domain. For all non-negative subharmonic functions $f\colon \Omega \to \R$ it holds that
  \begin{equation}\label{eq: HermiteHadamard ineq thm}
    \frac{1}{|\Omega|}\int_{\Omega}f(x)\,dx \leq \frac{d}{|\partial\Omega|}\int_{\partial\Omega}f(x)\,d\sigma(x)\,.
  \end{equation}
  Equality holds in~\eqref{eq: HermiteHadamard ineq thm} if and only if $f(x)\equiv 0$. Moreover, if the constant $d$ in the right-hand side were replaced by something smaller there exists a bounded convex domain $\Omega \subset \R^d$ and a non-negative subharmonic function $f\colon \Omega \to \R$ for which the inequality fails.
\end{theorem}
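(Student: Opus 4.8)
The plan is to reduce the integral inequality to a statement about the Saint Venant torsion function and prove the equivalent gradient bound, since the Hermite--Hadamard formulation and the bound $\|\nabla u\|_{L^\infty} < d|\Omega|/|\partial\Omega|$ for the solution of $-\Delta u = 1$ with $u|_{\partial\Omega} = 0$ are equivalent (this equivalence is essentially the duality used in \cite{beck_improved_2019}: testing \eqref{eq: HermiteHadamard ineq thm} against $f = |\nabla u|^2 + 2u$, which is subharmonic by a Payne-type computation, and conversely recovering the gradient bound from the sharp constant). First I would recall the heat-equation representation: if $u_t$ solves the heat equation on $\Omega$ with Dirichlet boundary data and initial data $\mathbbm{1}_\Omega$, then $u = \int_0^\infty u_t\,dt$, and $\nabla u(x_0)$ at a boundary-touching maximum point can be estimated by comparing $u_t$ with the half-space heat kernel, paying attention to the inradius.

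The second and central step is to insert the Ba\~nuelos--Kr\"oger isoperimetric inequality \cite{BanuelosKroger} for heat solutions on convex domains of fixed inradius in place of whatever cruder bound produced the $d^{3/2}$ (resp. $d(d+1)/\sqrt{d+2}$) constant in \cite{beck_improved_2019}. Concretely, one bounds $|\nabla u(x_0)|$ by an integral in $t$ of a quantity controlled, via Ba\~nuelos--Kr\"oger, by the corresponding one-dimensional slab heat flow, and the resulting one-dimensional integral evaluates to exactly $d \cdot r(\Omega)$ up to the geometric normalization; combining with the elementary bound $r(\Omega) \le |\Omega|/|\partial\Omega| \cdot$ (something) — more precisely using $|\Omega| \le \frac{|\partial\Omega|\, \|\nabla u\|_\infty}{?}$ wait — one uses that for convex $\Omega$ one has $r(\Omega)\,|\partial\Omega| \le d\,|\Omega|$, so the slab computation giving $|\nabla u(x_0)| \le$ (const)$\cdot r(\Omega)$ upgrades to $< d|\Omega|/|\partial\Omega|$. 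The strictness comes from the fact that equality in Ba\~nuelos--Kr\"oger, in the inradius bound, and in the half-space kernel comparison cannot hold simultaneously for a genuine bounded convex domain; tracking these equality cases is exactly what the theorem's rigidity claim ($f\equiv 0$) and non-attainment claim require.

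For the sharpness (failure of the inequality with any constant smaller than $d$), the plan is to exhibit a sequence of convex domains along which $c_d(\Omega_n) \to d$, which the equality-case analysis points to: take $\Omega_n$ to be a long thin convex body — a slab-like region, e.g. a box $[0,1]\times[0,n]^{d-1}$ or a thin cylinder over a $(d-1)$-dimensional set, degenerating so that the eccentricity $D(\Omega_n)/r(\Omega_n)\to\infty$. On such domains the torsion function is, away from the ends, essentially the one-dimensional parabola $u(x) \approx \tfrac12 x_1(1-x_1)$ in the thin direction, giving $\|\nabla u\|_\infty \to \tfrac12$ while $|\Omega_n|/|\partial\Omega_n| \to \tfrac{1}{2d}\cdot d = \tfrac12\cdot\frac{1}{d}\cdot d$; one checks $d|\Omega_n|/|\partial\Omega_n| \to \tfrac12$ as well, so the ratio tends to $1$, i.e. $c_d(\Omega_n)\to d$. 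Equivalently, in the Hermite--Hadamard picture one tests with $f = |\nabla u|^2 + 2u$ on these domains. Then I would either invoke the two-dimensional quantitative bound \eqref{eq: two-dimensional quantitative bound} (which already shows $c_2(\Omega)\to 2$ forces eccentricity $\to\infty$, consistent with this construction) or do the direct asymptotic computation in general $d$.

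The main obstacle I expect is the clean insertion of Ba\~nuelos--Kr\"oger into the gradient estimate with the correct tracking of equality: the Beck et al.\ argument bounds $|\nabla u|$ by integrating the boundary heat flux, and one must verify that the quantity to which Ba\~nuelos--Kr\"oger applies is exactly the one appearing after Green's identity / the layer-cake decomposition, and that the slab model produces the constant $d$ (not $d-1$ or $d+1$) — this is where the parity-dependent constants in \eqref{eq: bounds by Beck et al.} came from, so getting a parity-independent $d$ is the delicate point. The secondary difficulty is the strict inequality: one needs that a bounded convex $\Omega$ cannot be a slab, so at least one of the comparison steps is strict with a quantitative gap depending on $\Omega$; making this quantitative in dimension $2$ is precisely \eqref{eq: two-dimensional quantitative bound} and uses the Brascamp--Lieb--Luttinger-type rearrangement result of M\'endez-Hern\'andez \cite{mendez-hern_andez_brascamp-lieb-luttinger_2002}.
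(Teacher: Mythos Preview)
Your overall plan---reduce to the torsion gradient bound, insert Ba\~nuelos--Kr\"oger in place of the cruder width bound, track equality cases---matches the paper's architecture. But there are two concrete gaps.

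\medskip

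\textbf{Missing link: Sperb's inequality.} Ba\~nuelos--Kr\"oger controls the survival probability $\int_\Omega p_\Omega(t,x,y)\,dy$, and hence $\|u_\Omega\|_{L^\infty}$, by the corresponding slab quantity---it says nothing directly about gradients or boundary heat flux. Your sentence ``bounds $|\nabla u(x_0)|$ by an integral in $t$ of a quantity controlled, via Ba\~nuelos--Kr\"oger, by the slab heat flow'' elides exactly the step that does the work. The paper (like~\cite{beck_improved_2019}) first applies Sperb's $P$-function inequality
\[
\|\nabla u_\Omega\|_{L^\infty}^2 \leq 2\|u_\Omega\|_{L^\infty}\,,
\]
which is sharp for the slab, and \emph{then} uses Ba\~nuelos--Kr\"oger to get $\|u_\Omega\|_{L^\infty} < r(\Omega)^2/2$. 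Combined, $\|\nabla u_\Omega\|_{L^\infty} < r(\Omega)$, and the inradius inequality $r(\Omega)|\partial\Omega| \leq d|\Omega|$ finishes. Without Sperb (or an equivalent $P$-function argument) you have no route from B--K to a gradient bound.

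\medskip

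\textbf{The sharpness example is wrong.} Take your box $\Omega_n = [0,1]\times[0,n]^{d-1}$. Then $|\Omega_n| = n^{d-1}$, $|\partial\Omega_n| = 2n^{d-1} + 2(d-1)n^{d-2}$, so $|\Omega_n|/|\partial\Omega_n| \to 1/2$, while $\|\nabla u_{\Omega_n}\|_{L^\infty} \to 1/2$. Hence
\[
c_d(\Omega_n) = \frac{|\partial\Omega_n|}{|\Omega_n|}\|\nabla u_{\Omega_n}\|_{L^\infty} \to 1\,, \quad \text{not } d\,.
\]
The issue is that the proof of the upper bound chains \emph{two} inequalities: $\|\nabla u_\Omega\|_{L^\infty} < r(\Omega)$ and $r(\Omega)|\partial\Omega| \leq d|\Omega|$. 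The box saturates the first asymptotically but is far from the second---equality in the inradius bound holds only for bodies tangential to a ball (all supporting hyperplanes touch a common inscribed ball), which a long box is not. The paper instead takes a degenerate $(d{+}1)$-simplex: a regular $d$-simplex of side $\eta\to\infty$ in a hyperplane with one extra vertex at distance $1$. Being a simplex it is tangential to a ball, so $|\partial\Omega_\eta|/|\Omega_\eta| = d/r(\Omega_\eta)$ exactly, and as it flattens it contains arbitrarily large boxes of the form $(0,1-\eps)\times(-r,r)^{d-1}$, which forces $r(\Omega_\eta)^{-1}\|\nabla u_{\Omega_\eta}\|_{L^\infty} \to 1$. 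Both inequalities are then simultaneously near-sharp, giving $c_d(\Omega_\eta)\to d$.
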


Our approach to proving Theorem~\ref{thm:HermiteHadamard}, which follows closely that of~\cite{beck_improved_2019}, is based on the following observation (see also~\cite{beck_improved_2019,lu_dimension-free_2019,hoskins_towards_2019,MR2083805,MR1801688}). Let $u_\Omega\colon \Omega \to \R$ denote the torsion function of $\Omega$, that is the solution to the boundary value problem
\begin{equation}\label{eq: def torsion function}
  \biggl\{\hspace{-5pt}\begin{array}{rl}
    -\Delta u_\Omega(x)=1\quad &\mbox{in }\Omega\,,\\[2pt]
    u_\Omega(x)=0 &\mbox{on }\partial\Omega\,.
  \end{array}
\end{equation}
Note that $u_\Omega(x)\geq 0$ in $\Omega$.
 For any $f\in H^1(\Omega)$
\begin{align*}
  \int_\Omega f(x)\,dx = \int_\Omega (-\Delta u_\Omega(x))f(x)\,dx = \int_\Omega u_\Omega(x)(-\Delta f(x))\,dx - \int_{\partial\Omega}\frac{\partial u_\Omega}{\partial \nu}(x)f(x)\,d\sigma(x)\,,
\end{align*}
where $\nu$ denotes the outward pointing unit normal. Since $u_\Omega \geq 0$ and vanishes on the boundary, we deduce for $f$ non-negative and subharmonic, $\Delta f(x)\geq 0$, that
\begin{equation}\label{eq: normal derivative HH}
  \int_\Omega f(x)\,dx \leq \Bigl\|\frac{\partial u_\Omega}{\partial \nu}\Bigr\|_{L^\infty(\partial\Omega)}\int_{\partial\Omega}f(x)\,d\sigma(x)\,.
\end{equation}
Note that we can get arbitrarily close to equality in the above by taking $f$ harmonic and $f|_{\partial\Omega}$ vanishing away from a neighbourhood of where the modulus of the normal derivative achieves its maximum.

As a consequence Theorem~\ref{thm:HermiteHadamard} follows as a corollary of the following result:
\begin{theorem}\label{thm: torsion gradient bound}
  Let $\Omega\subset \R^d$, $d\geq 2$, be a bounded convex domain and $u_\Omega$ solve~\eqref{eq: def torsion function}, then
  \begin{equation}\label{eq: gradient bound thm}
    \|\nabla u_\Omega \|_{L^\infty(\Omega)} < d\frac{|\Omega|}{|\partial\Omega|}\,.
  \end{equation}
  Moreover, if the constant $d$ in the right-hand side were replaced by something smaller there exists a bounded convex domain $\Omega \subset \R^d$ for which the inequality fails.
\end{theorem}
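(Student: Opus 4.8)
The plan is to prove Theorem~\ref{thm: torsion gradient bound} since Theorem~\ref{thm:HermiteHadamard} follows from it via the argument already given in the excerpt around~\eqref{eq: normal derivative HH}, together with the observation that for a convex domain $\|\nabla u_\Omega\|_{L^\infty(\Omega)} = \|\partial u_\Omega/\partial\nu\|_{L^\infty(\partial\Omega)}$ (the maximum of $|\nabla u_\Omega|$ on a convex domain is attained on the boundary, where the tangential part vanishes). So I focus on the gradient bound.

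First I would use the heat-equation representation of the torsion function: $u_\Omega(x) = \int_0^\infty (e^{t\Delta_\Omega}\1)(x)\,dt$, where $e^{t\Delta_\Omega}$ is the Dirichlet heat semigroup. Differentiating, $\nabla u_\Omega(x) = \int_0^\infty \nabla (e^{t\Delta_\Omega}\1)(x)\,dt$, and one estimates the gradient of the heat content locally. Following Beck et al.~\cite{beck_improved_2019}, at a boundary point $x_0$ where $|\nabla u_\Omega|$ is maximal one bounds $|\nabla u_\Omega(x_0)|$ by comparing with the half-space: by convexity the domain lies on one side of the supporting hyperplane at $x_0$, so $e^{t\Delta_\Omega}\1 \leq e^{t\Delta_H}\1$ where $H$ is the half-space, which gives a pointwise one-dimensional Gaussian bound for $\partial u_\Omega/\partial\nu$ of the form $\int_0^\infty t^{-1/2}(\text{something})\,dt$. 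The new input, replacing the cruder step in~\cite{beck_improved_2019}, is the Ba\~nuelos--Kr\"oger isoperimetric inequality for the heat equation on convex domains of fixed inradius: among convex domains with inradius $r$, the infinite slab $\{0 < x_1 < 2r\}$ minimises (or the relevant extremal ordering holds for) the heat content functional that appears. Feeding this in, the $t$-integral is controlled by the corresponding slab quantity, which one computes explicitly and bounds by $d\,|\Omega|/|\partial\Omega|$ after invoking the elementary geometric inequality $|\Omega| \geq r(\Omega)|\partial\Omega|/d$ relating volume, perimeter, and inradius for convex bodies (itself immediate from writing $\Omega$ as a union of cones over the faces of an approximating polytope with apex at the incenter).

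Concretely I would (i) reduce to estimating $-\partial u_\Omega/\partial\nu$ at a maximising boundary point $x_0$; (ii) write this as a time integral of the normal derivative of the Dirichlet heat content and split the integral at a scale comparable to $r(\Omega)^2$; (iii) on the short-time part use the half-space comparison to get a universal Gaussian tail; (iv) on the long-time part use the Ba\~nuelos--Kr\"oger comparison with the slab of width $2r(\Omega)$ so that the remaining integral is an explicit one-variable computation; (v) combine to get $\|\nabla u_\Omega\|_{L^\infty} \leq d\,r(\Omega)$ — in fact with a strict inequality unless $\Omega$ is a slab, which it never is since $\Omega$ is bounded — and finally (vi) conclude using $r(\Omega) \leq d\,|\Omega|/(d\,|\partial\Omega|)\cdot\!$, i.e.\ $r(\Omega)|\partial\Omega| \leq d|\Omega|$, to obtain~\eqref{eq: gradient bound thm}. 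Strictness is genuine: the slab comparison in step (iv) is an equality only in the limit of an infinite slab, and the inradius inequality in step (vi) is an equality only when $\Omega$ is (a limit of) a cone configuration, so at most one can be asymptotically sharp for a given bounded $\Omega$; tracking which one shows the product is strictly less than $d|\Omega|/|\partial\Omega|$.

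For the sharpness claim I would exhibit an explicit sequence $\Omega_n$ for which $\|\nabla u_{\Omega_n}\|_{L^\infty}/(|\Omega_n|/|\partial\Omega_n|) \to d$. The natural candidate is a thin slab-like slab that is nearly a product of a fixed $(d-1)$-dimensional cube (or ball) with an interval, rescaled so the cross-section is huge compared to the thickness: then $u_{\Omega_n}$ is well approximated, away from the lateral boundary, by the one-dimensional torsion function $\frac12 x_1(2r - x_1)$ of the slab, whose gradient has $L^\infty$ norm $r$; meanwhile $|\Omega_n|/|\partial\Omega_n| \to r/d$ because the lateral boundary becomes negligible and the slab contributes $2r\cdot A$ in volume against $2A$ in (top and bottom) surface area so the ratio is $r$, but wait — one must instead take the cross-section to be a simplex-like or needle-like shape so that $|\partial\Omega|$ is dominated by faces through which the relevant geometric inequality $r|\partial\Omega| = d|\Omega|$ is nearly saturated; the correct extremal family, which I would take from unwinding the equality analysis of Ba\~nuelos--Kr\"oger together with the inradius inequality, is a cone-like domain (e.g.\ a very flat cone, or a long thin simplex) whose incircle touches all faces and whose eccentricity tends to infinity, consistent with~\eqref{eq: two-dimensional quantitative bound}. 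I expect this construction and the verification that both asymptotic equalities can be achieved \emph{simultaneously} to be the main obstacle — the two sources of slack pull in tension, and resolving it is exactly where the bounded-domain strictness and the need for \emph{unbounded} eccentricity come from. I would present the two-dimensional case~\eqref{eq: two-dimensional quantitative bound} separately, deriving it from M\'endez-Hern\'andez's comparison result~\cite{mendez-hern_andez_brascamp-lieb-luttinger_2002} applied to quantify the gap in step (iv).
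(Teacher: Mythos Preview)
Your overall architecture is right and matches the paper: reduce to $\|\nabla u_\Omega\|_{L^\infty(\Omega)} < r(\Omega)$, then use the convex-geometric inequality $r(\Omega)\leq d\,|\Omega|/|\partial\Omega|$. The gap is in how you get the sharp gradient bound $\|\nabla u_\Omega\|_{L^\infty(\Omega)} < r(\Omega)$. Ba\~nuelos--Kr\"oger controls the \emph{value} of the heat content at interior points, i.e.\ after integration in $t$ it gives $\|u_\Omega\|_{L^\infty(\Omega)} < r(\Omega)^2/2$; it does \emph{not} bound the normal derivative of the heat content at boundary points. Your steps~(iii)--(v) therefore do not close: the half-space comparison handles short time but is strictly lossy for the slab at every $t>0$, and for long time you have no mechanism to pass from the pointwise bound $e^{t\Delta_\Omega}\1(x)\leq e^{t\Delta_{S_{r}}}\1(0)$ to a bound on $\partial_\nu(e^{t\Delta_\Omega}\1)(x_0)$ with the correct constant. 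Any such splitting will produce a constant strictly larger than~$1$ in front of $r(\Omega)$.

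The paper supplies exactly the missing bridge you need: Sperb's $P$-function inequality $\|\nabla u_\Omega\|_{L^\infty(\Omega)}^2 \leq 2\|u_\Omega\|_{L^\infty(\Omega)}$, valid on convex domains, converts the Ba\~nuelos--Kr\"oger bound on $\|u_\Omega\|_{L^\infty}$ into the sharp gradient bound in one line. No time-splitting is required. If you insert Sperb in place of your steps~(ii)--(v), the argument becomes the paper's proof verbatim.

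On sharpness, you correctly diagnose the tension: one needs a sequence that simultaneously saturates $r(\Omega)|\partial\Omega| = d|\Omega|$ (tangential-to-inscribed-ball, e.g.\ a simplex) and behaves like an infinite slab near some boundary point. A product slab fails the first requirement, as you noted. The paper's explicit family is the $(d{+}1)$-simplex with one vertex at height~$1$ over a regular $d$-simplex of side $\eta\to\infty$ in the base hyperplane; being a simplex it is tangential to its inball (so $|\partial\Omega_\eta|/|\Omega_\eta|=d/r(\Omega_\eta)$ exactly), and near the centre of the large base it contains arbitrarily wide boxes of height $1-\eps$, so a maximum-principle comparison with the torsion function of that box shows $|\partial_\nu u_{\Omega_\eta}(0)|\to 1/2$ while $r(\Omega_\eta)\leq 1/2$. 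Your description points at the right object but stops short of naming it; the verification that both equalities are achieved simultaneously is not an obstacle once you take this specific simplex.
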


\begin{remark} A couple of remarks:
  \begin{enumerate}
    \item Firstly, since $u_\Omega$ vanishes on the boundary and
  \begin{equation*}
    \Delta |\nabla u_\Omega|^2 = 2\sum_{i,j=1}^d \Bigl(\frac{\partial^2 u_\Omega}{\partial x_i \partial x_j}\Bigr)^2 \geq 0
  \end{equation*}
  the maximum principle implies that $\bigl\|\frac{\partial u_\Omega}{\partial \nu}\bigr\|_{L^\infty(\partial\Omega)} = \|\nabla u_\Omega\|_{L^\infty(\Omega)}$. Thus any bound for $c_d(\Omega)$ implies a corresponding bound for $\|\nabla u_\Omega\|_{L^\infty(\Omega)}$.

  \item Secondly, an application of Greens identity yields a matching lower bound:
  \begin{equation}\label{eq: matching lower bound gradient}
    \|\nabla u_\Omega\|_{L^\infty(\Omega)}= \Bigl\|\frac{\partial u_\Omega}{\partial \nu}\Bigr\|_{L^\infty(\partial\Omega)} \geq \frac{1}{|\partial\Omega|}\int_{\partial\Omega}\Bigl|\frac{\partial u_\Omega}{\partial \nu}\Bigr|\,d\sigma(x) = \frac{|\Omega|}{|\partial\Omega|}\,,
  \end{equation}
  where equality holds if and only if $\Omega$ is a ball, by the classical overdetermined Serrin problem (see~\cite{NitschTrombetti,Serrin}).
  \end{enumerate}
\end{remark}

\subsection{A family of Hermite--Hadamard-type inequalities} In Section~\ref{sec:alpha inequalities} we consider a one-parameter family of inequalities containing the Hermite--Hadamard inequality~\eqref{eq:HermiteHadamard} as a special case. Specifically, we consider for $\alpha \leq d$ the inequality
\begin{equation}\label{eq: alpha ineq Omega}
  \int_\Omega f(x)\,dx \leq c_{d,\alpha}(\Omega)|\Omega|^{\frac{\alpha}d}|\partial\Omega|^{\frac{1-\alpha}{d-1}}\int_{\partial\Omega}f(x)\,d\sigma(x)\,,
\end{equation}
where as before $\Omega\subset \R^d$ is a convex domain and $f\colon \Omega \to \R$ is a non-negative subharmonic function. When $\alpha=d$ the inequality~\eqref{eq: alpha ineq Omega} is nothing but~\eqref{eq:HermiteHadamard}, while for $\alpha=1$ it reduces to an inequality studied in~\cite{lu_dimension-free_2019,steinerberger_hermite-hadamard_2018,beck_improved_2019,hoskins_towards_2019}. Again our main interest is towards upper bounds for $c_{d, \alpha}(\Omega)$. We note that for $\alpha>d$ no uniform bound can hold since by taking $f\equiv 1$ in~\eqref{eq: alpha ineq Omega} such a bound would imply a reverse isoperimetric inequality, which is a contradiction. However, for $\alpha <d$ a uniform bound can easily be deduced from the end-point case $\alpha=d$ and the isoperimetric inequality. In fact, as we shall see in Section~\ref{sec:alpha inequalities} any upper bound for $c_{d,\alpha_0}(\Omega)$ combined with the isoperimetric inequality implies an upper bound for $c_{d, \alpha}(\Omega)$ for all $\alpha<\alpha_0$.
In particular, the bounds for $\alpha=1$ in~\cite{lu_dimension-free_2019,steinerberger_hermite-hadamard_2018,beck_improved_2019,hoskins_towards_2019} imply bounds for all $\alpha <1$.

For this family of inequalities we are not able to say much concerning the optimal uniform constants
\begin{equation}\label{eq: supremum alpha}
  c_{d, \alpha}= \sup\{c_{d, \alpha}(\Omega): \Omega \subset \R^d, \mbox{ convex and bounded}\}\,.
\end{equation}
However, what we find interesting is that the dependence of $c_{d, \alpha}(\Omega)$ on the geometry appears very different for $\alpha<d$ compared to the end-point case $\alpha=d$. Indeed, we shall prove that when $\alpha<d$ the constant $c_{d,\alpha}(\Omega)$ becomes small if $\Omega$ has high eccentricity. We emphasize that this is fundamentally different form the behaviour we expect in the case $\alpha=d\geq 3$, and by~\eqref{eq: two-dimensional quantitative bound} know to be true when $\alpha=d=2$.

As in the case $\alpha=d$ our results can equivalently be phrased in terms of bounds for $\|\nabla u_\Omega\|_{L^\infty(\Omega)}$. Indeed, arguing as for $\alpha=d$ one concludes that
\begin{equation}\label{eq: alpha constant in terms of gradient}
  c_{d,\alpha}(\Omega)= |\Omega|^{-\frac{\alpha}{d}}|\partial\Omega|^{\frac{\alpha-1}{d-1}}\|\nabla u_\Omega\|_{L^\infty(\Omega)}\,.
\end{equation}
Our main result in this direction is the following:
\begin{theorem}\label{thm: alpha gradient bound}
  Let $\Omega\subset \R^d$, $d\geq 2$, be a bounded convex domain and $u_\Omega$ solve~\eqref{eq: def torsion function}. Then, for any $\alpha \leq d$,
  \begin{equation}\label{eq: alpha gradient bound thm}
    \|\nabla u_\Omega \|_{L^\infty(\Omega)} \leq \tilde c_{d, \alpha}\Bigl(\frac{r(\Omega)}{D(\Omega)}\Bigr)^{\!\frac{d-\alpha}{d(d-1)}}|\Omega|^{\frac{\alpha}{d}}|\partial\Omega|^{\frac{1-\alpha}{d-1}}\,,
  \end{equation}
  with $\tilde c_{d, \alpha}>0$ depending only on $d, \alpha$. Moreover, the power of $r(\Omega)/D(\Omega)$ is optimal.
\end{theorem}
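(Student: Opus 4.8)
My plan is to deduce Theorem~\ref{thm: alpha gradient bound} from Theorem~\ref{thm: torsion gradient bound} together with a single scale-invariant inequality of convex geometry, and then to establish optimality of the exponent by testing against a family of thin cylinders. For the reduction, fix $\alpha\le d$, set $\beta=d-\alpha\ge 0$, and observe the elementary identity
\[
  \frac{|\Omega|}{|\partial\Omega|}=|\Omega|^{\frac{\alpha}{d}}\,|\partial\Omega|^{\frac{1-\alpha}{d-1}}\Bigl(\frac{|\Omega|^{1/d}}{|\partial\Omega|^{1/(d-1)}}\Bigr)^{\!\beta},
\]
verified by matching the powers of $|\Omega|$ and $|\partial\Omega|$ on the two sides. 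Combining this with $\|\nabla u_\Omega\|_{L^\infty(\Omega)}<d|\Omega|/|\partial\Omega|$ from Theorem~\ref{thm: torsion gradient bound}, the bound~\eqref{eq: alpha gradient bound thm} follows as soon as one proves the dimensional estimate
\[
  \frac{|\Omega|^{1/d}}{|\partial\Omega|^{1/(d-1)}}\le C_d\Bigl(\frac{r(\Omega)}{D(\Omega)}\Bigr)^{\!\frac{1}{d(d-1)}}\qquad\text{for all bounded convex }\Omega\subset\R^d,
\]
with $C_d$ depending only on $d$: indeed, raising this to the power $\beta$ and inserting it into the identity yields~\eqref{eq: alpha gradient bound thm} with $\tilde c_{d,\alpha}=d\,C_d^{\,d-\alpha}$. (When $\alpha=d$ there is nothing beyond Theorem~\ref{thm: torsion gradient bound} to prove.)

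For the geometric inequality I would invoke John's theorem: after a rigid motion there is an origin-centred ellipsoid $E=\{x:\sum_i x_i^2/a_i^2\le 1\}$, with semi-axes $a_1\ge a_2\ge\cdots\ge a_d>0$, such that $E\subseteq\Omega\subseteq dE$. This gives, with constants depending only on $d$: $|\Omega|\asymp a_1a_2\cdots a_d$ (comparing $|E|$ and $|dE|$); $r(\Omega)\asymp a_d$ and $D(\Omega)\asymp a_1$ (comparing in- and circumradii of $E$ and $dE$); and $|\partial\Omega|\asymp a_1a_2\cdots a_{d-1}$. For the last comparison the lower bound comes from projecting onto the hyperplane orthogonal to the shortest axis, $|\partial\Omega|\ge 2|P(\Omega)|\ge 2|P(E)|=2\omega_{d-1}a_1\cdots a_{d-1}$, while the upper bound follows since $\Omega\subseteq dE$ sits inside the box $\prod_i[-da_i,da_i]$, whose surface area is $\asymp a_1\cdots a_{d-1}$ because $a_d$ is smallest, together with monotonicity of surface area under inclusion of convex bodies. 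Substituting these four comparisons, a short computation of exponents shows that the displayed inequality reduces to $a_d^{\,d-2}\le a_2a_3\cdots a_{d-1}$, which is immediate since each factor on the right is at least $a_d$ (and is vacuous when $d=2$). This proves the geometric inequality, hence Theorem~\ref{thm: alpha gradient bound}.

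For optimality of the exponent $\frac{d-\alpha}{d(d-1)}$, take the thin cylinders $\Omega_\varepsilon=B^{d-1}_\varepsilon\times(0,1)\subset\R^d$, where $B^{d-1}_\varepsilon\subset\R^{d-1}$ is the ball of radius $\varepsilon$, and let $\varepsilon\to0^+$. Then $|\Omega_\varepsilon|\asymp\varepsilon^{d-1}$, $|\partial\Omega_\varepsilon|\asymp\varepsilon^{d-2}$ (the lateral boundary dominates the two end faces), $r(\Omega_\varepsilon)=\varepsilon$ and $D(\Omega_\varepsilon)\to1$; inserting these into the right-hand side of~\eqref{eq: alpha gradient bound thm} the $\alpha$-dependent powers cancel and one obtains a quantity $\asymp\varepsilon$ for every $\alpha\le d$. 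On the other hand, the scaling $u_{\Omega_\varepsilon}(y)=\varepsilon^2 u_{\Lambda_{1/\varepsilon}}(y/\varepsilon)$ with $\Lambda_L=B^{d-1}_1\times(0,L)$, together with the fact that $u_{\Lambda_L}$ converges locally uniformly as $L\to\infty$ to the cross-sectional torsion function solving $-\Delta v=1$ on $B^{d-1}_1$ with zero boundary data, gives $\|\nabla u_{\Omega_\varepsilon}\|_{L^\infty}=\varepsilon\,\|\nabla u_{\Lambda_{1/\varepsilon}}\|_{L^\infty}\ge c_d\,\varepsilon$ for small $\varepsilon$, with $c_d>0$. Hence replacing $\frac{d-\alpha}{d(d-1)}$ by any strictly larger exponent would make the right-hand side of~\eqref{eq: alpha gradient bound thm} an $o(\varepsilon)$ quantity while the left-hand side stays $\gtrsim\varepsilon$, so the inequality would fail for $\Omega_\varepsilon$ with $\varepsilon$ small; a smaller exponent only weakens the bound since $r/D\le1$. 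Thus $\frac{d-\alpha}{d(d-1)}$ is optimal.

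The main obstacle, I expect, is identifying the correct geometric inequality --- in particular recognising that $|\partial\Omega|$ is governed by the product of the $d-1$ \emph{largest} semi-axes of the John ellipsoid, which is precisely what produces the power $\frac1{d(d-1)}$ and, through the reduction, the power $\frac{d-\alpha}{d(d-1)}$; once everything is reduced to $a_d^{\,d-2}\le a_2\cdots a_{d-1}$ the argument is trivial. The only other step needing mild care is the lower bound $\|\nabla u_{\Omega_\varepsilon}\|_{L^\infty}\gtrsim\varepsilon$ in the optimality part, which rests on the scaling relation above and the stability of the torsion function under elongation of a cylinder.
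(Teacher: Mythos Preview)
Your proof is correct and follows essentially the same route as the paper: reduce to a scale-invariant geometric inequality via Theorem~\ref{thm: torsion gradient bound}, prove that inequality using John's ellipsoid and the comparison $|\partial\Omega|\asymp a_1\cdots a_{d-1}$, and check optimality on cigar-shaped domains. The one place where the paper is slightly more economical is the optimality step: rather than your convergence argument for $\|\nabla u_{\Omega_\varepsilon}\|_{L^\infty}\gtrsim\varepsilon$, it invokes the elementary lower bound $\|\nabla u_\Omega\|_{L^\infty}\ge |\Omega|/|\partial\Omega|$ from~\eqref{eq: matching lower bound gradient}, which for your cylinders already gives $\gtrsim\varepsilon^{d-1}/\varepsilon^{d-2}=\varepsilon$ directly.
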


While Theorem~\ref{thm: torsion gradient bound} tells us that for $\alpha=d$ the supremum~\eqref{eq: supremum alpha} is not attained and we expect any sequence of $\{\Omega_k\}_{k\geq 1}$, with $|\Omega_k|=1$, satisfying 
$$
\lim_{k\to \infty}|\partial\Omega_k|\pps\|\nabla u_{\Omega_k}\|_{L^\infty(\Omega_k)}= d
$$ 
to become unbounded in the limit, Theorem~\ref{thm: alpha gradient bound} tells us that the situation for $\alpha<d$ is different:
\begin{corollary}\label{cor: boundedness of maximizing sequence}
  For $\Omega\subset \R^d$, $d\geq 2$, denote by $u_\Omega$ the solution of~\eqref{eq: def torsion function}. For $\alpha < d$ any sequence of convex domains $\{\Omega_k\}_{k\geq 1}\subset \R^d$, with $|\Omega_k|=1$, satisfying
  \begin{align*}
    \liminf_{k\to \infty} |\partial\Omega_k|^{\frac{\alpha-1}{d-1}}\|\nabla u_{\Omega_k}\|_{L^\infty(\Omega_k)}>0\,,
  \end{align*}
  is uniformly bounded in the Hausdorff metric. In particular, up to translation any such sequence contains a subsequence converging with respect to the Hausdorff metric.
\end{corollary}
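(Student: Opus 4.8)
The plan is to deduce the statement from Theorem~\ref{thm: alpha gradient bound} together with the normalization $|\Omega_k|=1$ and the Blaschke selection theorem; no essentially new idea is needed. First I would rewrite the bound of Theorem~\ref{thm: alpha gradient bound} in the equivalent form
$$|\partial\Omega|^{\frac{\alpha-1}{d-1}}\|\nabla u_\Omega\|_{L^\infty(\Omega)}\leq \tilde c_{d,\alpha}\Bigl(\frac{r(\Omega)}{D(\Omega)}\Bigr)^{\!\frac{d-\alpha}{d(d-1)}}|\Omega|^{\frac{\alpha}{d}}\,,$$
which under the normalization $|\Omega_k|=1$ reads $|\partial\Omega_k|^{\frac{\alpha-1}{d-1}}\|\nabla u_{\Omega_k}\|_{L^\infty(\Omega_k)}\leq \tilde c_{d,\alpha}\bigl(r(\Omega_k)/D(\Omega_k)\bigr)^{\frac{d-\alpha}{d(d-1)}}$. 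Since $\alpha<d$ the exponent $\frac{d-\alpha}{d(d-1)}$ is strictly positive, so the assumption $\liminf_{k\to\infty}|\partial\Omega_k|^{\frac{\alpha-1}{d-1}}\|\nabla u_{\Omega_k}\|_{L^\infty(\Omega_k)}=L>0$ forces, for all $k$ beyond some index $K_0$, the eccentricity bound $r(\Omega_k)/D(\Omega_k)\geq \delta$ with $\delta=\delta(d,\alpha,L)>0$ fixed.

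Next I would turn this lower bound on the eccentricity into an upper bound on the diameter using that $\Omega_k$ contains a ball of radius $r(\Omega_k)$: writing $\omega_d$ for the volume of the unit ball in $\R^d$,
$$1=|\Omega_k|\geq \omega_d\, r(\Omega_k)^d\geq \omega_d\,\delta^d D(\Omega_k)^d\,,$$
hence $D(\Omega_k)\leq \omega_d^{-1/d}\delta^{-1}$ for $k\geq K_0$; since the finitely many remaining $\Omega_k$ are bounded convex domains, $\sup_k D(\Omega_k)<\infty$. After translating each $\Omega_k$ so that a fixed point (e.g.\ its incenter) lies at the origin, the whole family lies in a common ball, which is exactly the asserted uniform boundedness in the Hausdorff metric. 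Applying the Blaschke selection theorem to the closures of the $\Omega_k$ then produces a subsequence converging in the Hausdorff metric---and, by continuity of Lebesgue measure on convex bodies of uniformly bounded diameter, the limit is still a convex body of volume $1$, although this last remark is not needed for the statement.

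I do not anticipate a genuine obstacle: all the analytic content sits in Theorem~\ref{thm: alpha gradient bound}, and what remains is a short chain of elementary estimates. The only two points demanding a little care are that the hypothesis is phrased with a $\liminf$, so the eccentricity bound is available only for large $k$ (harmless, since finitely many bounded convex domains are automatically jointly bounded), and that the conclusion holds only up to translation, so the domains must be centered appropriately before one invokes compactness.
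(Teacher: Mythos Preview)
Your argument is correct and essentially identical to the paper's: both deduce from Theorem~\ref{thm: alpha gradient bound} with $|\Omega_k|=1$ and $r(\Omega_k)\leq \omega_d^{-1/d}$ a uniform diameter bound, then invoke the Blaschke selection theorem. The only cosmetic difference is the order of substitution---the paper inserts $r(\Omega_k)\leq \omega_d^{-1/d}$ directly into the bound to get $\lesssim D(\Omega_k)^{-\frac{d-\alpha}{d(d-1)}}$, whereas you first bound the eccentricity below and then the diameter---but the content is the same.
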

\begin{remark}
As a consequence of Corollary~\ref{cor: boundedness of maximizing sequence} we find that if $\{\Omega_k\}_{k\geq 1}$ is a maximizing sequence for $c_{d, \alpha}$ there exists a subsequence which, after translation, converges to a convex domain $\Omega^*$. Naturally, it is tempting to claim that the limit $\Omega^*$ realizes the supremum, $c_{d, \alpha}=c_{d,\alpha}(\Omega^*)$. However, although it is not very difficult to conclude that, up to passing to a subsequence,
$$
\nabla u_{\Omega_{k}} \to \nabla u_{\Omega^*} \quad \mbox{in } L^p(\R^d), \mbox{ for any } p<\infty\,,
$$  
we are at this point unable to deduce that 
$$
\lim_{k\to \infty}\|\nabla u_{\Omega_{k}}\|_{L^\infty(\Omega_{k})} = \|\nabla u_{\Omega^*}\|_{L^\infty(\Omega^*)}\,.
$$
Nevertheless, it is an interesting question to understand the shape of such limiting domains, and how their geometry depends on $\alpha$. The fact that makes this question particularly intriguing is that $\Omega^*$ is expected to be quite different from a ball. In fact, by arguing as in~\eqref{eq: matching lower bound gradient} and using the isoperimetric inequality one finds that balls are the unique minimizers of $c_{d, \alpha}(\Omega)$ for all $\alpha \leq d$,
\begin{align*}
  c_d(\Omega) = |\Omega|^{-\frac{\alpha}{d}}|\partial\Omega|^{\frac{\alpha-1}{d-1}}\|\nabla u_\Omega\|_{L^\infty(\Omega)} 
  \geq \biggl[\frac{|\Omega|^{\frac{1}{d}}}{|\partial\Omega|^{\frac{1}{d-1}}}\biggr]^{d-\alpha} 
  \geq 
  d^{\frac{\alpha-d}{d-1}}\omega_d^{\frac{\alpha-d}{d(d-1)}}\,,
\end{align*}
where equality holds if and only if $\Omega$ is a ball and $\omega_d$ denotes the volume of the $d$-dimensional unit ball.
For the case $d=2$ and $\alpha=1$ candidates for maximizing domains were obtained in~\cite{hoskins_towards_2019}.
\end{remark}

Before we move on we note that there are three cases of the shape optimization problem associated to~\eqref{eq: supremum alpha} which appear particularly natural:
\begin{enumerate}
  \item\label{itm: alpha=d} $\alpha =d$: Corresponding to the inequality $\|\nabla u_\Omega\|_{L^\infty(\Omega)}\leq c_{d}\frac{|\Omega|}{|\partial\Omega|}$. This case is of particular interest as the end-point and strongest inequality in the range, indeed for any $\alpha <\alpha'\leq d$ we have $c_{d,\alpha} \leq c_{d,\alpha'} \bigl(d\pps \omega_d^{1/d}\bigr)^{\frac{\alpha-\alpha'}{d-1}}$ (see~\eqref{eq: isoperimetric bound} below). 

  \item\label{itm: alpha=1} $\alpha=1$: Corresponding to the inequality $\|\nabla u_\Omega\|_{L^\infty(\Omega)}\leq c_{d,1}|\Omega|^{\frac{1}{d}}$, and by scaling the shape optimization problem of maximizing $\|\nabla u_\Omega\|_{L^\infty(\Omega)}$ with a measure constraint.

  \item\label{itm: alpha=0} $\alpha=0$: Corresponding to the inequality $\|\nabla u_\Omega\|_{L^\infty(\Omega)}\leq c_{d,0}|\partial\Omega|^{\frac{1}{d-1}}$, and by scaling the shape optimization problem of maximizing $\|\nabla u_\Omega\|_{L^\infty(\Omega)}$ with a perimeter constraint.
\end{enumerate}
Although the maximum of the gradient of the torsion function is a classical quantity in the Saint Venant theory of elasticity (the maximum shear stress), there is to the authors knowledge little known concerning these shape optimization problems. Apart from $\alpha=d$ and $\alpha=1$, which have recently been considered in the context of Hermite--Hadamard-type inequalities~\cite{hoskins_towards_2019,beck_improved_2019}, we are unaware of results in this direction. In particular, the problem when $\alpha=0$ appears not to have been studied. That being said there is a wide range of bounds for $\|\nabla u_\Omega\|_{L^\infty(\Omega)}$ under various assumptions on $\Omega$ available in the literature, we refer to~\cite{MR1244022} and references therein.

In the proof of our main result we shall see that the problem for $\alpha=d$ is closely related to maximizing $\|\nabla u_\Omega\|_{L^\infty(\Omega)}$ with a constraint on the inradius of $\Omega$. In fact, our proof of Theorem~\ref{thm: torsion gradient bound} relies on showing that this shape optimization problem is solved by the infinite slab, which follows by combining an inequality of Sperb with a result of Ba\~nuelos and Kr\"oger~\cite{sperb_maximum_1981,BanuelosKroger}. Although the results obtained in this paper essentially settle the shape optimization problem when $\alpha=d$ it would be interesting to obtain quantitative results similar in spirit to~\eqref{eq: two-dimensional quantitative bound} also when $d\geq 3$.

\section{Proof of Theorems~\ref{thm:HermiteHadamard} \&~\ref{thm: torsion gradient bound}}

By the argument in the previous section Theorem~\ref{thm:HermiteHadamard} follows as a consequence of Theorem~\ref{thm: torsion gradient bound}. Indeed, the inequality~\eqref{eq: HermiteHadamard ineq thm} follows from Theorem~\ref{thm: torsion gradient bound} and~\eqref{eq: normal derivative HH} as does the sharpness of the constant. Moreover, since~\eqref{eq: gradient bound thm} is strict, equality holds in~\eqref{eq: HermiteHadamard ineq thm} if and only if $f(x)\equiv 0$. Thus what remains is to prove Theorem~\ref{thm: torsion gradient bound}.

The proof of the inequality in Theorem~\ref{thm: torsion gradient bound} follows closely that of the upper bound provided in~\cite{beck_improved_2019}. However, instead of utilising the maximum principle to reduce the problem to considering an infinite slab of the same \emph{width} as $\Omega$, an application of a result of Ba\~nuelos and Kr\"oger reduces the problem to a slab of the same \emph{inradius} as $\Omega$. This allows us to remove the extra factor $\sim\sqrt{d}$ in the result of~\cite{beck_improved_2019} which arose as a consequence of using Steinhagen's inequality~\cite{Steinhagen} to bound the width $w(\Omega)$ in terms of the inradius $r(\Omega)$. The precise differences in the proofs will be explained in greater detail below.

\begin{proof}[Proof of Theorem~\ref{thm: torsion gradient bound}] We split the proof in two steps. In the first step the bound $c_{d}(\Omega)< d$ is established, while in the second step the sharpness of the bound is proved by explicit construction of a family of bounded convex sets $\{\Omega_\eta\}_{\eta}$ satisfying $\lim_{\eta \to \infty}c_d(\Omega_\eta)= d$.

\medskip

\noindent{\bf Step 1:} (Proof of the upper bound $c_d(\Omega)< d$)
Recall that for any bounded convex domain $\Omega\subset \R^d$ we have
\begin{equation}\label{eq: inradius bound}
  \frac{|\Omega|}{|\partial\Omega|}\leq r(\Omega)\leq d \frac{|\Omega|}{|\partial\Omega|}\,,
\end{equation}
see for instance~\cite[eq.~(13)]{LarsonJST}. Equality in the upper bound of~\eqref{eq: inradius bound} holds if and only if $\Omega$ is tangential to a ball (i.e.\ all its regular supporting hyperplanes are tangent to the same inscribed ball~\cite{Schneider1}). Also the lower bound is seen to be sharp by considering the sets $(-1, 1)\times (-R, R)^{d-1}$ as $R\to \infty$.

By~\eqref{eq: inradius bound} we have that
\begin{equation}\label{eq: gradient/inradius}
  \frac{|\partial\Omega|}{|\Omega|}\|\nabla u_\Omega\|_{L^\infty(\Omega)} \leq d\pps r(\Omega)^{-1}\|\nabla u_\Omega\|_{L^\infty(\Omega)}\,.
\end{equation}
We wish to maximize the right-hand side with respect to $\Omega$. We aim to show that
\begin{equation}\label{eq: gradient optimization fixed inradius}
  \sup\{r(\Omega)^{-1}\|\nabla u_\Omega\|_{L^\infty(\Omega)}: \Omega \subset \R^d, \mbox{ convex and bounded}\} = 1\,,
\end{equation}
and that the supremum is not achieved. Note that by scaling this is equivalent to maximizing $\|\nabla u_\Omega\|_{L^\infty(\Omega)}$ among all convex domains of a given inradius.

A classical inequality of Sperb allows us to bound the maximum of the gradient of $u_\Omega$ in terms of the function itself:
\begin{equation}\label{eq: Sperbs ineq}
  \|\nabla u_\Omega\|_{L^\infty(\Omega)}^2 \leq 2 \|u_\Omega\|_{L^\infty}\,.
\end{equation}
When $d=2$ this is~\cite[eq. (6.12)]{sperb_maximum_1981} while for $d\geq 3$ the inequality can be deduced in the same way from~\cite[Corollary~5.1]{sperb_maximum_1981}, note that Sperb considers $-\Delta u=2$ resulting in a different constant (see also~\cite{PhilippinSafoui} for non-smooth $\Omega$). It is easily checked by explicit calculation that equality in~\eqref{eq: Sperbs ineq} holds if $\Omega$ is the infinite slab $(-1, 1)\times \R^{d-1}$.

Let $p_\Omega(t, x, y)$ denote the heat kernel of the Dirichlet Laplacian on $\Omega$, then
\begin{equation*}
   u_\Omega(x) = \int_0^\infty \int_\Omega p_\Omega(t, x, y)\,dydt\,.
\end{equation*}
By integrating the bound of~\cite[Theorem~1]{BanuelosKroger} with respect to $t$, for any $x\in \Omega$,
\begin{equation}
  u_\Omega(x) = \int_0^\infty\int_\Omega p_\Omega(t, x, y)\,dydt \leq \int_0^\infty \int_{S_{r(\Omega)}}p_{S_{r(\Omega)}}(t, 0, y)\,dydt = u_{S_{r(\Omega)}}(0)\,,
\end{equation}
where $S_{r(\Omega)}=(-r(\Omega), r(\Omega))\times \R^{d-1}$ and equality holds if and only if $\Omega$ is an infinite slab and $\dist(x, \partial \Omega)=r(\Omega)$. Since $u_{S_{r(\Omega)}}(x) = \frac{r(\Omega)^2-x_1^2}{2}$, it follows that
\begin{equation}\label{eq: supremum bound fixed inradius}
  \|u_\Omega\|_{L^\infty(\Omega)}< \frac{r(\Omega)^2}{2}
\end{equation}
for any bounded convex $\Omega \subset \R^d$. For $d=2$ and with non-strict inequality this bound was proved by Sperb~\cite[eq.~(6.13)]{sperb_maximum_1981}.

Combining~\eqref{eq: Sperbs ineq} and~\eqref{eq: supremum bound fixed inradius} we find
\begin{equation*}
  r(\Omega)^{-1}\|\nabla u_\Omega\|_{L^\infty(\Omega)} \leq \sqrt{2}r(\Omega)^{-1}\|u_\Omega\|^{1/2}_{L^\infty(\Omega)}<1\,,
\end{equation*}
which proves that the supremum in~\eqref{eq: gradient optimization fixed inradius} is at most $1$ and that this value is not attained. In view of~\eqref{eq: gradient/inradius}, this completes the proof of the inequality in Theorem~\ref{thm: torsion gradient bound}. 

\medskip

Before moving on to the sharpness of the result we for the sake of comparison explain how the proof above differs from that given in~\cite{beck_improved_2019}. Specifically, the difference appears at the point in the proof where we appeal to the result of Ba\~nuelos and Kr\"oger. In~\cite{beck_improved_2019} the maximum principle and that, in appropriately chosen coordinates,
\begin{equation*}
  \Omega \subset S_{w(\Omega)/2}=\bigl(-\tfrac{w(\Omega)}{2}, \tfrac{w(\Omega)}{2}\bigr)\times \R^{d-1}
\end{equation*}
was used to bound
\begin{equation}\label{eq: maximum principle bound}
\|u_\Omega\|_{L^\infty(\Omega)} \leq \|u_{S_{w(\Omega)/2}}\|_{L^\infty(S_{w(\Omega)/2})} = \frac{w(\Omega)^2}{8}\,.
\end{equation}
As $r(\Omega)\leq w(\Omega)/2$ it is clear that this bound is in general weaker than~\eqref{eq: supremum bound fixed inradius}. Combining the inequalities~\eqref{eq: gradient/inradius},~\eqref{eq: Sperbs ineq}, and~\eqref{eq: maximum principle bound} with an application of Steinhagen's inequality~\cite{Steinhagen},
\begin{equation}\label{eq: Steinhagen}
   w(\Omega) \leq \begin{cases}
     2\sqrt{d}\, r(\Omega) & \mbox{if }d\mbox{ is odd}\\[2pt]
     2 \frac{d+1}{\sqrt{d+2}}\,r(\Omega)\hspace{10pt} \,   &\mbox{if }d\mbox{ is even}\,,
   \end{cases}
\end{equation}
completes the proof of the upper bound in~\eqref{eq: bounds by Beck et al.}. While Sperb's inequality~\eqref{eq: Sperbs ineq} is sharp for the infinite slab, equality in Steinhagen's inequality~\eqref{eq: Steinhagen} holds only for the regular $(d+1)$-simplex. These competing facts are what leads to the superfluous factor $\sim \sqrt{d}$ in the bound from~\cite{beck_improved_2019}. 

\medskip

{\noindent\bf Step 2:} (Proof of sharpness)
We shall construct a family of bounded convex sets $\{\Omega_\eta\}_{\eta}$ such that $\lim_{\eta \to \infty}c_d(\Omega_\eta)=d$. To achieve this we need to choose $\Omega_\eta$ such that we are arbitrarily close to equality in each part of our proof for the upper bound. Namely, we need to be close to equality in both the upper bound of~\eqref{eq: inradius bound} and almost attain the supremum in~\eqref{eq: gradient optimization fixed inradius}. As of yet we have not proved that the supremum~\eqref{eq: gradient optimization fixed inradius} is not less than $1$, but this is not too difficult. Indeed, one can consider the sequence $\Omega_R = (-1, 1)\times (-R, R)^{d-1}$ and show that equality holds in the limit $R\to \infty$. However, as we wish to prove that equality can be attained while simultaneously being close to equality in the upper bound of~\eqref{eq: inradius bound} we need to choose our domains a bit more carefully.

Let $\Omega_\eta$ be the $(d+1)$-simplex obtained by taking a regular $d$-simplex of sidelength $\eta \gg 1$ in the hyperplane $x_1=0$ centred at the origin and adding a final vertex at $(1, 0, \ldots, 0)$. In the planar case the construction is illustrated in Figure~\ref{fig: almost maximizer}. 
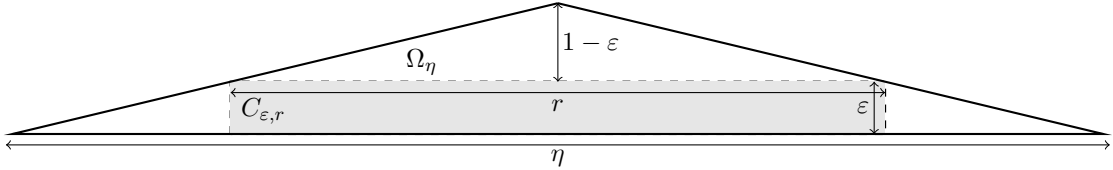
\begin{figure}[h]
  \centering
  \begin{tikzpicture}[scale=1.45]

  \edef \e {10};
  \edef \h {1.2};

  \draw[thick, dashed] (\e*0.2,0)--(\e*0.2,\h*0.4)--(\e*0.8,\h*0.4)--(\e*0.8,0)--(\e*0.2,0);
  \fill[gray!20] (\e*0.2,0)--(\e*0.2,\h*0.4)--(\e*0.8,\h*0.4)--(\e*0.8,0)--(\e*0.2,0);
  \draw[thick] (\e*0.5,\h)--(\e,0)--(0, 0)--(\e*0.5,\h);
  \draw[<->] (-0.06,-0.1)--(\e+0.06,-0.1);
  \draw[<->] (\e*0.5,\h*0.4)--(\e*0.5, \h);

  \draw[<->] (\e*0.2,\h*0.4-0.1)--(\e*0.8,\h*0.4-0.1);
  \draw[<->] (\e*0.8-0.1,0)--(\e*0.8-0.1,\h*0.4);

  \node at (\e*0.5,-0.22) {\scalebox{0.9}{$\eta$}};
  \node at (\e*0.5+0.3,\h*0.7) {\scalebox{0.9}{$1-\eps$}};
  \node at (\e*0.8-0.2,\h*0.2) {\scalebox{0.9}{$\eps$}};
  \node at (\e*0.5,\h*0.4-0.22) {\scalebox{0.9}{$r$}};
  \node at (\e*0.2+0.3,\h*0.18) {\scalebox{0.9}{$C_{\eps, r}$}};
  \node at (\e*0.375,\h*0.55) {\scalebox{0.9}{$\Omega_\eta$}};

  \end{tikzpicture}
  \caption{The almost maximizing domain $\Omega_\eta$ along with the inscribed box $C_{\eps, r}$. For any $\eps, r>0$ we can find $\eta$ sufficiently large so that $C_{\eps, r}\subset \Omega_\eta$.}
  \label{fig: almost maximizer}
\end{figure}

Since $\Omega_\eta$ is a $(d+1)$-simplex and thus tangential to a ball
\begin{equation*}
  \frac{|\partial \Omega_\eta|}{|\Omega_\eta|} = \frac{d}{r(\Omega_\eta)}\,.
\end{equation*}
In order to complete our proof we need to show that
\begin{equation*}
  r(\Omega_\eta)^{-1}\|\nabla u_\Omega\|_{L^{\infty}(\Omega_\eta)}=r(\Omega_\eta)^{-1}\Bigl\|\frac{\partial u_{\Omega_\eta}}{\partial \nu}\Bigr\|_{L^{\infty}(\partial\Omega_\eta)} \geq 1+ o(1)\,, \quad \mbox{as }\eta \to \infty\,.
\end{equation*}

For any $\eps>0$ and $r>0$ there exists an $\eta$ large enough so that
\begin{equation*}
  C_{\eps, r}=\bigl\{x\in \R^d: 0<x_1<1-\eps,\ |x_j| < r, j=2, \ldots, d\bigr\} \subset \Omega_\eta\,,
\end{equation*}
see Figure~\ref{fig: almost maximizer}.
By the maximum principle $0< u_{C_{\eps,r}}(x) \leq u_{\Omega_\eta}(x)$ for all $x\in C_{\eps, r}$. Since $0 \in \partial C_{\eps,r}\cap \partial \Omega_\eta$ and $u_{\Omega_\eta}(0)=u_{C_{\eps, r}}(0)=0$ it holds that
\begin{equation*}
  \Bigl|\frac{\partial u_{C_{\eps,r}}(0)}{\partial \nu}\Bigr| \leq \Bigl|\frac{\partial u_{\Omega_\eta}(0)}{\partial \nu}\Bigr|\,.
\end{equation*}
Consequently, for all $\eps>0, r>0$ there exists an $\eta$ large enough so that
\begin{equation*}
  r(\Omega_\eta)^{-1}\Bigl\|\frac{\partial u_{\Omega_\eta}}{\partial \nu}\Bigr\|_{L^{\infty}(\partial\Omega_\eta)}\geq r(\Omega_\eta)^{-1}\Bigl|\frac{\partial u_{C_{\eps,r}}(0)}{\partial \nu}\Bigr|\geq 2\Bigl|\frac{\partial u_{C_{\eps,r}}(0)}{\partial \nu}\Bigr|\,,
\end{equation*}
where we used $r(\Omega_\eta)\leq 1/2$. 

As $r\to \infty$ and $\eps \to 0$ the function $u_{C_{\eps, r}}$ converges to $u_S(x)= \frac{x_1(1-x_1)}{2}$ uniformly on any compact, where $S=\{x\in \R^d: 0<x_1<1\}$. Moreover, since $v=u_S-u_{C_{\eps, r}}$ is harmonic in $C_{\eps, r}$ and vanishes when $x_1=0$ we find that $\tilde v$ defined as the reflection of $v$ through $x_1=0$, that is for $x= (x_1, x')\in \R^d$ we set 
$$
\tilde v(x)= \sgn(x_1)v(|x_1|, x')\,,$$
is harmonic in 
$$
 \tilde C_{\eps, r}= \{x\in \R^d: |x_1|<1-\eps, |x_j|<r, j=2, \ldots, d\}\,
 $$
 Consequently, $\partial_j\tilde v(x)$ is harmonic for each $j=1, \ldots , d$ and thus the mean value principle and the divergence theorem yields, for any $0<\rho<1/2$,
\begin{align*}
  |\nabla \tilde v(0)| 
  =  \frac{1}{\omega_d\rho^d}\biggl|\int_{B_{\rho}(0)}\nabla \tilde v(x)\,dx\biggr| 
  =
  \frac{1}{\omega_d\rho^d}\biggl|\int_{\partial B_{\rho}(0)} \tilde v(x) \frac{x}{\rho}\,d\sigma(x)\biggr|
  \leq
  \frac{1}{\omega_d\rho^d}\int_{\partial B_{\rho}(0)} |\tilde v(x)|\,d\sigma(x)\,.
\end{align*}
Multiplying both sides by $\rho^{2d-1}$ and integrating from $0$ to $1/2$ with respect to $\rho$ we find that
\begin{equation*}
  |\nabla \tilde v(0)|\leq C_d \|\tilde v\|_{L^1(B_{1/2}(0))}\,.
\end{equation*}
Since $\tilde v$ converges to zero as $r\to \infty$ and $\eps \to 0$ we conclude that
\begin{equation*}
 \lim_{\substack{\eps \to 0\\ r\to \infty}} \Bigl|\frac{\partial u_{C_{\eps,r}}(0)}{\partial \nu}\Bigr| = \Bigl|\frac{\partial u_{S}(0)}{\partial \nu}\Bigr|=\frac{1}{2}\,.
\end{equation*}

By combining the above we find
\begin{equation*}
  c_d(\Omega_\eta)=\frac{|\partial\Omega_\eta|}{|\Omega_\eta|}\|\nabla u_{\Omega_\eta}\|_{L^\infty(\Omega_\eta)} = d\pps r(\Omega_\eta)^{-1}\|\nabla u_{\Omega_\eta}\|_{L^\infty(\Omega_\eta)} \geq d+o(1)\,, \quad \mbox{as }\eta \to \infty\,,
\end{equation*}
which proves that $c_d\geq d$ and therefore completes the proof of Theorem~\ref{thm: torsion gradient bound}.
\end{proof}

\section{A quantitative improvement when \texorpdfstring{$d=2$}{d=2}}

In the two-dimensional case a result of M\'endez-Hern\'andez~\cite{mendez-hern_andez_brascamp-lieb-luttinger_2002} actually allows one to strengthen both Theorem~\ref{thm:HermiteHadamard} and Theorem~\ref{thm: torsion gradient bound}. Namely, integrating the inequality of~\cite[Theorem~5.2]{mendez-hern_andez_brascamp-lieb-luttinger_2002} with respect to $t$ implies that the supremum of the torsion function of a bounded convex domain $\Omega\subset \R^2$ is bounded not only by that of the strip of same inradius but by the supremum of the torsion function in the truncated strip $((-r(\Omega), r(\Omega)) \times \R) \cap B_{D(\Omega)-r(\Omega)}(0)$.

Setting $R(\Omega)=(-r(\Omega), r(\Omega))\times (-D(\Omega)+r(\Omega), D(\Omega)-r(\Omega))$ and using the maximum principle one concludes that
\begin{equation}
  \|u_\Omega \|_{L^\infty} \leq \|u_{R(\Omega)}\|_{L^\infty} = u_{R(\Omega)}(0, 0)\,.
\end{equation}

The torsion function of a rectangle can be explicitly computed. Indeed, for $R_l = (-1/2, 1/2)\times (-l, l)$,
\begin{equation*}
  u_{R_l}(x_1, x_2) = \frac{1-4x_1^2}{8} - \frac{2}{\pi^3}\sum_{n\geq 1} \frac{1-(-1)^n}{n^3\cosh(n\pi l)}\cosh(n\pi x_2)\sin(n\pi (x_1+1/2))\,.
\end{equation*}

Since
\begin{equation*}
  u_{R(\Omega)}(x)= 4r(\Omega)^2u_{R_{\frac{D(\Omega)-r(\Omega)}{2r(\Omega)}}}(x/(2r(\Omega))) 
\end{equation*}
we find
\begin{align*}
  \|u_{R(\Omega)}\|_{L^\infty} 
  &= 4r(\Omega)^2 \|u_{R_{\frac{D(\Omega)-r(\Omega)}{2r(\Omega)}}}\|_{L^\infty} \\
  &= 
  4r(\Omega)^2u_{R_{\frac{D(\Omega)-r(\Omega)}{2r(\Omega)}}}(0, 0)\\
  &=
   \frac{r(\Omega)^2}{2} - \frac{8r(\Omega)^2}{\pi^3}\sum_{n\geq 1} \frac{1-(-1)^n}{n^3\cosh\bigl(n\pi \frac{D(\Omega)-r(\Omega)}{2r(\Omega)}\bigr)}\sin(n\pi/2)\\
   &=
   \frac{r(\Omega)^2}{2} - \frac{16r(\Omega)^2}{\pi^3}\sum_{k\geq 0} \frac{(-1)^k}{(2k+1)^3\cosh\bigl((2k+1)\pi \frac{D(\Omega)-r(\Omega)}{2r(\Omega)}\bigr)}\\
   &\leq
   \frac{r(\Omega)^2}{2} - \frac{16r(\Omega)^2}{\pi^3}\biggl[\frac{1}{\cosh\bigl(\pi \frac{D(\Omega)-r(\Omega)}{2r(\Omega)}\bigr)}-\frac{1}{9\cosh\bigl(3\pi \frac{D(\Omega)-r(\Omega)}{2r(\Omega)}\bigr)}\biggr]\,,
\end{align*}
where we in the last step used the fact that $x\mapsto \frac{1}{x^3 \cosh(x)}, x>0,$ is decreasing. 

Since $\cosh(x)$, $x>0$, is strictly increasing and satisfies $\frac{1}{\cosh(x)} \leq 2e^{-x}$,
\begin{equation*}
  \frac{1}{\cosh\bigl(\pi \frac{D(\Omega)-r(\Omega)}{2r(\Omega)}\bigr)}-\frac{1}{9\cosh\bigl(3\pi \frac{D(\Omega)-r(\Omega)}{2r(\Omega)}\bigr)} > \frac{8}{9\cosh\bigl(\pi \frac{D(\Omega)-r(\Omega)}{2r(\Omega)}\bigr)}\geq \frac{16}{9}e^{- \frac{\pi}{2} \frac{D(\Omega)-r(\Omega)}{r(\Omega)}}\,.
\end{equation*}
We conclude that
\begin{equation*}
  \|u_{R(\Omega)}\|_{L^\infty}  < \frac{r(\Omega)^2}{2}\Bigl[1- c e^{- \frac{\pi}{2} \frac{D(\Omega)-r(\Omega)}{r(\Omega)}}\Bigr]\,,
\end{equation*}
for any $c\leq \frac{2^9}{9\pi^3}$.

Consequently, the inequalities in Theorems~\ref{thm:HermiteHadamard} and~\ref{thm: torsion gradient bound} can for $d=2$ be strengthened to: For $\Omega\subset \R^2$ bounded and convex
\begin{equation*}
  \|\nabla u_\Omega\|_{L^\infty(\Omega)} < c_2(\Omega)\frac{|\Omega|}{|\partial\Omega|} \qquad \mbox{and}\qquad
  \frac{1}{|\Omega|}\int_\Omega f(x)\,dx \leq \frac{c_2(\Omega)}{|\partial\Omega|}\int_{\partial\Omega}f(x)\,d\sigma(x)\,,
\end{equation*}
for all non-negative subharmonic functions $f\colon \Omega \to \R$, with
\begin{equation*}
  c_2(\Omega)< 2\sqrt{1-ce^{-\pi/2 \frac{D(\Omega)-r(\Omega)}{r(\Omega)}}}
\end{equation*}
for some constant $c>0$.

\section{Upper bounds for \texorpdfstring{$c_{d,\alpha}(\Omega)$}{cda(Omega)}}
\label{sec:alpha inequalities}

We now turn our attention to the case $\alpha<d$. As mentioned in the introduction our result for $\alpha=d$ together with the isoperimetric inequality implies that $c_{d, \alpha}<\infty$. Indeed, by~\eqref{eq: alpha constant in terms of gradient}, Theorem~\ref{thm: torsion gradient bound}, and the isoperimetric inequality
\begin{equation}\label{eq: trivial bound cdalpha}
  c_{d, \alpha}(\Omega) 
  = \biggl[\frac{|\Omega|^{\frac{1}{d}}}{|\partial\Omega|^{\frac{1}{d-1}}}\biggr]^{d-\alpha} \frac{|\partial\Omega|}{|\Omega|}\|\nabla u_\Omega\|_{L^\infty(\Omega)} 
  \leq d^{\frac{\alpha-1}{d-1}}\omega_d^{\frac{\alpha-d}{d(d-1)}}\,.
\end{equation}
Note that this bound cannot possibly be sharp, indeed we have equality in the isoperimetric inequality if and only if $\Omega$ is a ball in which case we are far from equality in Theorem~\ref{thm: torsion gradient bound}. Utilizing a bound for $c_{d,1}$ proved in~\cite{beck_improved_2019} allows us to do better by a negative power of $d$:
\begin{lemma}\label{lem: alpha bound}
  Let $\Omega\subset \R^d$, $d\geq 2$, be a bounded convex domain and $u_\Omega$ solve~\eqref{eq: def torsion function}. Then, for $1\leq \alpha \leq d$,
  \begin{equation*}
    c_{d,\alpha} \leq d^{\frac{\alpha-1}{d-1}-\frac{d-\alpha}{2(d-1)}}\omega_d^{\frac{\alpha-d}{d(d-1)}}\,,
  \end{equation*}
  while for $\alpha \leq 1$
  \begin{equation*}
     c_{d,\alpha} \leq d^{\frac{\alpha-1}{d-1}-\frac{1}{2}}\omega_d^{\frac{\alpha-d}{d(d-1)}}\,.
  \end{equation*}
\end{lemma}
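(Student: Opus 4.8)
The plan is to reduce both cases of the lemma to the known endpoint bound for $c_{d,1}$ together with the isoperimetric inequality, following the same ``interpolation'' philosophy sketched in the introduction. Recall from~\eqref{eq: alpha constant in terms of gradient} that
$$
c_{d,\alpha}(\Omega) = |\Omega|^{-\frac{\alpha}{d}}|\partial\Omega|^{\frac{\alpha-1}{d-1}}\|\nabla u_\Omega\|_{L^\infty(\Omega)}\,,
$$
so that for any two exponents $\alpha, \alpha'$ one has the exact identity
$$
c_{d,\alpha}(\Omega) = c_{d,\alpha'}(\Omega)\cdot\biggl[\frac{|\Omega|^{\frac1d}}{|\partial\Omega|^{\frac1{d-1}}}\biggr]^{\alpha'-\alpha}\,.
$$
The first step is to record this identity and to note that the bracketed quantity is, up to the dimensional constant coming from the isoperimetric inequality, at most $(d\,\omega_d^{1/d})^{-1}$: indeed the isoperimetric inequality gives $|\partial\Omega|\geq d\,\omega_d^{1/d}|\Omega|^{(d-1)/d}$, hence $|\Omega|^{1/d}/|\partial\Omega|^{1/(d-1)} \leq (d\,\omega_d^{1/d})^{-1/(d-1)}$. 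This is precisely the inequality labelled~\eqref{eq: isoperimetric bound} alluded to in item~\eqref{itm: alpha=d} of the introduction.

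For the first case $1\leq\alpha\leq d$, I would apply the identity with $\alpha' = 1$, so that
$$
c_{d,\alpha}(\Omega) = c_{d,1}(\Omega)\cdot\biggl[\frac{|\Omega|^{\frac1d}}{|\partial\Omega|^{\frac1{d-1}}}\biggr]^{1-\alpha}
= c_{d,1}(\Omega)\cdot\biggl[\frac{|\partial\Omega|^{\frac1{d-1}}}{|\Omega|^{\frac1d}}\biggr]^{\alpha-1}\,.
$$
Since $\alpha - 1 \geq 0$ the extra factor is now an \emph{increasing} function of the isoperimetric ratio, so one cannot bound it directly; instead I would rewrite $c_{d,1}(\Omega)$ via its gradient expression and combine. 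Concretely, using $c_{d,1}(\Omega) = |\Omega|^{-1/d}\|\nabla u_\Omega\|_{L^\infty(\Omega)}$ and the bound $\|\nabla u_\Omega\|_{L^\infty(\Omega)} \leq \sqrt{2}\,\|u_\Omega\|_{L^\infty}^{1/2} < r(\Omega)$ from Sperb's inequality~\eqref{eq: Sperbs ineq} and~\eqref{eq: supremum bound fixed inradius}, together with the inradius bound $r(\Omega)\leq d|\Omega|/|\partial\Omega|$ from~\eqref{eq: inradius bound}, one gets
$$
c_{d,\alpha}(\Omega) = |\Omega|^{-\frac{\alpha}{d}}|\partial\Omega|^{\frac{\alpha-1}{d-1}}\|\nabla u_\Omega\|_{L^\infty(\Omega)}
< |\Omega|^{-\frac{\alpha}{d}}|\partial\Omega|^{\frac{\alpha-1}{d-1}} r(\Omega)^{1-\frac{d-\alpha}{d-1}\cdot\frac{?}{}}\cdots
$$
— the clean way is: write $\|\nabla u_\Omega\|_{L^\infty} = \|\nabla u_\Omega\|_{L^\infty}^{\frac{d-\alpha}{d-1}}\cdot\|\nabla u_\Omega\|_{L^\infty}^{\frac{\alpha-1}{d-1}+\frac{1}{?}}$ is awkward, so instead interpolate at the level of the final inequality: bound one ``copy'' of the gradient by $c_{d,1}\,|\Omega|^{1/d}$ (the $\alpha=1$ bound of~\cite{beck_improved_2019}, which gives $c_{d,1}\leq \sqrt{2}\,d^{1/2}\omega_d^{-1/d}$ up to constants — precisely the bound stated there) and absorb the remaining $|\partial\Omega|^{(\alpha-1)/(d-1)}|\Omega|^{-(\alpha-1)/d}$ factor using the isoperimetric inequality in the direction $|\partial\Omega|\leq$ (nothing). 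The correct maneuver, which I would carry out, is: from $c_{d,\alpha}(\Omega)=c_{d,1}(\Omega)\,[\,|\partial\Omega|^{1/(d-1)}/|\Omega|^{1/d}\,]^{\alpha-1}$, bound $c_{d,1}(\Omega)\leq c_{d,1}$ and bound the isoperimetric factor \emph{from above} using instead a \emph{lower} bound on $|\Omega|^{1/d}/|\partial\Omega|^{1/(d-1)}$ — which does not exist uniformly — so one must split the gradient. I would therefore use $\|\nabla u_\Omega\|_{L^\infty}\leq r(\Omega)$ directly: then
$$
c_{d,\alpha}(\Omega) < |\Omega|^{-\frac{\alpha}{d}}|\partial\Omega|^{\frac{\alpha-1}{d-1}}r(\Omega)\,,
$$
and now apply $r(\Omega)\leq d|\Omega|/|\partial\Omega|$ raised to a power and $r(\Omega)\leq$ something; writing $r(\Omega) = r(\Omega)^{\frac{d-\alpha}{d-1}}r(\Omega)^{1-\frac{d-\alpha}{d-1}} = r(\Omega)^{\frac{d-\alpha}{d-1}}r(\Omega)^{\frac{\alpha-1}{d-1}}$ and using $r(\Omega)\leq d|\Omega|/|\partial\Omega|$ on the first factor and $r(\Omega)\leq d\,\omega_d^{-1/d}|\Omega|^{1/d}\cdot d^{?}$ — cleaner: on the second factor use $r(\Omega)\leq \frac{|\Omega|^{1/d}}{\omega_d^{1/d}}$ (a ball of radius $r$ fits inside, so $\omega_d r(\Omega)^d\leq|\Omega|$). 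Substituting both and collecting the powers of $|\Omega|$ and $|\partial\Omega|$ one checks they telescope to $\bigl[|\Omega|^{1/d}/|\partial\Omega|^{1/(d-1)}\bigr]^{d-\alpha}$ multiplied by $d^{(\alpha-1)/(d-1)}$, and a final application of the isoperimetric inequality in the form $|\Omega|^{1/d}/|\partial\Omega|^{1/(d-1)}\leq (d\,\omega_d^{1/d})^{-1/(d-1)}$ yields
$$
c_{d,\alpha}(\Omega) \leq d^{\frac{\alpha-1}{d-1}}\cdot d^{-\frac{d-\alpha}{d(d-1)}\cdot\frac{d}{?}}\,\omega_d^{\frac{\alpha-d}{d(d-1)}}\,.
$$
Matching the exponent of $d$ to the claimed $\frac{\alpha-1}{d-1}-\frac{d-\alpha}{2(d-1)}$ pins down exactly how the splitting of $\|\nabla u_\Omega\|_{L^\infty}$ and $r(\Omega)$ must be balanced — the factor $\tfrac12$ comes from Sperb's square root $\|\nabla u_\Omega\|_{L^\infty}\leq\sqrt{2\|u_\Omega\|_\infty}$.

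The main obstacle, and the step to get exactly right, is the bookkeeping of exponents in this splitting: one has \emph{two} quantities controlled by the inradius — $\|\nabla u_\Omega\|_{L^\infty}^2\leq 2\|u_\Omega\|_\infty < r(\Omega)^2$, i.e.\ $\|\nabla u_\Omega\|_{L^\infty}<r(\Omega)$, and separately one uses the two-sided relation $\frac{|\Omega|}{|\partial\Omega|}\leq r(\Omega)\leq d\frac{|\Omega|}{|\partial\Omega|}$ together with $\omega_d r(\Omega)^d\leq|\Omega|$ — and one must distribute these among the factors $|\Omega|^{-\alpha/d}$, $|\partial\Omega|^{(\alpha-1)/(d-1)}$, $\|\nabla u_\Omega\|_{L^\infty}$ so that the $|\Omega|$ and $|\partial\Omega|$ powers combine into a single isoperimetric ratio to the power $d-\alpha$ (which is then the only place the isoperimetric inequality is invoked, accounting for the $\omega_d^{(\alpha-d)/(d(d-1))}$ and part of the power of $d$), while the leftover powers of $d$ from the two uses of $r(\Omega)\leq d|\Omega|/|\partial\Omega|$ and the half-power from Sperb produce exactly $\frac{\alpha-1}{d-1}-\frac{d-\alpha}{2(d-1)}$. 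For the second case $\alpha\leq 1$, the argument is the same but one interpolates ``the other way'': now $\alpha-1\leq0$, so in $c_{d,\alpha}(\Omega)=c_{d,1}(\Omega)\,[\,|\Omega|^{1/d}/|\partial\Omega|^{1/(d-1)}\,]^{1-\alpha}$ the extra factor is a nonnegative power of the isoperimetric ratio and can be bounded directly by $(d\,\omega_d^{1/d})^{-(1-\alpha)/(d-1)}$, leaving just $c_{d,1}(\Omega)\leq c_{d,1}\leq \sqrt2\, d^{1/2}\omega_d^{-1/d}$ (the bound from~\cite{beck_improved_2019}, which in the normalization here reads $c_{d,1}\leq d^{1/2}$ times a constant); multiplying and simplifying the powers of $d$ and $\omega_d$ gives $d^{\frac{\alpha-1}{d-1}-\frac12}\omega_d^{\frac{\alpha-d}{d(d-1)}}$. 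I would present the $\alpha\leq1$ case first since it is the cleaner of the two, then do $1\leq\alpha\leq d$, being careful that at $\alpha=1$ both formulas reduce to the same thing, namely $c_{d,1}\leq d^{-1/2}\omega_d^{-1/d}$ up to the universal constant, consistent with~\cite{beck_improved_2019}.
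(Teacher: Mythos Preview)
Your treatment of the case $\alpha\leq 1$ is correct and coincides with the paper's: rewrite $c_{d,\alpha}(\Omega)=c_{d,1}(\Omega)\bigl[|\Omega|^{1/d}/|\partial\Omega|^{1/(d-1)}\bigr]^{1-\alpha}$, bound the bracket via the isoperimetric inequality (the exponent $1-\alpha\geq 0$ goes the right way), and invoke $c_{d,1}(\Omega)\leq d^{-1/2}\omega_d^{-1/d}$ from~\cite{beck_improved_2019}.

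For $1\leq\alpha\leq d$, however, there is a genuine gap. The approach you eventually settle on---bound $\|\nabla u_\Omega\|_{L^\infty}<r(\Omega)$, then split $r(\Omega)=r(\Omega)^{\frac{\alpha-1}{d-1}}r(\Omega)^{\frac{d-\alpha}{d-1}}$ and use $r(\Omega)\leq d|\Omega|/|\partial\Omega|$ on one factor and $r(\Omega)\leq\omega_d^{-1/d}|\Omega|^{1/d}$ on the other---does balance the $|\Omega|$ and $|\partial\Omega|$ powers, but it only yields
\[
c_{d,\alpha}(\Omega)<d^{\frac{\alpha-1}{d-1}}\omega_d^{\frac{\alpha-d}{d(d-1)}},
\]
which is exactly the ``trivial'' bound~\eqref{eq: trivial bound cdalpha} and is off from the lemma by the factor $d^{-\frac{d-\alpha}{2(d-1)}}$. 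The reason is that once you use $\|\nabla u_\Omega\|_{L^\infty}<r(\Omega)$ you have already spent the content of Theorem~\ref{thm: torsion gradient bound}; there is no further ``Sperb square root'' to exploit, and the inequality $r(\Omega)\leq\omega_d^{-1/d}|\Omega|^{1/d}$ is strictly weaker than the Beck et~al.\ bound $\|\nabla u_\Omega\|_{L^\infty}\leq d^{-1/2}\omega_d^{-1/d}|\Omega|^{1/d}$, so the $d^{-1/2}$ never enters.

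The fix is precisely the step you wrote down and then dismissed as ``awkward'': since $\log c_{d,\alpha}(\Omega)$ is an \emph{affine} function of $\alpha$ (immediate from~\eqref{eq: alpha constant in terms of gradient}), one has the exact identity
\[
c_{d,\alpha}(\Omega)=c_{d,1}(\Omega)^{\frac{d-\alpha}{d-1}}\,c_{d,d}(\Omega)^{\frac{\alpha-1}{d-1}},
\]
and bounding the two factors separately by $c_{d,1}\leq d^{-1/2}\omega_d^{-1/d}$ and $c_{d,d}\leq d$ gives the claimed exponent $\frac{\alpha-1}{d-1}-\frac{d-\alpha}{2(d-1)}$. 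This is the paper's argument. The $\tfrac12$ in the exponent comes from the $d^{-1/2}$ in the $\alpha=1$ bound of~\cite{beck_improved_2019}, not from re-running Sperb's inequality.
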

\begin{remark}
  For $d=2$ both bounds can be improved by utilizing the better bound for $c_{2,1}$ obtained in~\cite{hoskins_towards_2019}.
\end{remark}
\begin{proof}[Proof of Lemma~\ref{lem: alpha bound}]
  For any $\alpha< \alpha'\leq d$ the isoperimetric inequality and~\eqref{eq: alpha constant in terms of gradient} implies
  \begin{equation}\label{eq: isoperimetric bound}
     c_{d, \alpha}(\Omega) = \biggl[\frac{|\Omega|^{\frac{1}{d}}}{|\partial\Omega|^{\frac{1}{d-1}}}\biggr]^{\alpha'-\alpha} |\Omega|^{-\frac{\alpha'}{d}}|\partial\Omega|^{\frac{\alpha'-1}{d-1}}\|\nabla u_\Omega\|_{L^\infty(\Omega)} 
     \leq 
     \bigl(d \omega_d^{1/d}\bigr)^{\frac{\alpha-\alpha'}{d-1}} c_{d, \alpha'}(\Omega)\,.
   \end{equation} 
   Choosing $\alpha'=1$ and using the bound $c_{d,1}(\Omega)\leq \omega_d^{-1/d}d^{-1/2}$~\cite[Theorem~3]{beck_improved_2019} proves the second inequality of the lemma.

  To prove the first inequality we argue similarly. For any $\alpha_1 \leq \alpha \leq \alpha_2$ we have
  \begin{align*}
    c_{d, \alpha}(\Omega) 
    &= 
    \Bigl[|\Omega|^{-\frac{\alpha_1}{d}}|\partial\Omega|^{\frac{\alpha_1-1}{d-1}}\|\nabla u_\Omega\|_{L^\infty(\Omega)}\Bigr]^{\frac{\alpha_2-\alpha}{\alpha_2-\alpha_1}} 
    \Bigl[|\Omega|^{-\frac{\alpha_2}{d}}|\partial\Omega|^{\frac{\alpha_2-1}{d-1}}\|\nabla u_\Omega\|_{L^\infty(\Omega)}\Bigr]^{\frac{\alpha-\alpha_1}{\alpha_2-\alpha_1}}\\
    &= 
    c_{d, \alpha_1}(\Omega)^{\frac{\alpha_2-\alpha}{\alpha_2-\alpha_1}} 
    c_{d, \alpha_2}(\Omega)^{\frac{\alpha-\alpha_1}{\alpha_2-\alpha_1}}\,.
  \end{align*}
  Choosing $\alpha_1=1$, $\alpha_2=d$, and using the bounds of Theorem~\ref{thm: torsion gradient bound} and~\cite[Theorem~3]{beck_improved_2019} yields the desired bound, which completes the proof of Lemma~\ref{lem: alpha bound}.
\end{proof}

We turn to the proofs of Theorem~\ref{thm: alpha gradient bound} and Corollary~\ref{cor: boundedness of maximizing sequence}. 
\begin{proof}[Proof of Theorem~\ref{thm: alpha gradient bound}]
Our goal is to prove
\begin{equation}\label{eq: diameter decay proof}
  |\Omega|^{-\frac{\alpha}{d}}|\partial\Omega|^{\frac{\alpha-1}{d-1}}\|\nabla u_\Omega\|_{L^\infty(\Omega)} 
  =
  \biggl[\frac{|\Omega|^{\frac{1}{d}}}{|\partial\Omega|^{\frac{1}{d-1}}}\biggr]^{d-\alpha} \frac{|\partial\Omega|}{|\Omega|}\|\nabla u_\Omega\|_{L^\infty(\Omega)} \lesssim_{d,\alpha} \Bigl(\frac{r(\Omega)}{D(\Omega)}\Bigr)^{\frac{d-\alpha}{d(d-1)}}\,.
\end{equation}
By Theorem~\ref{thm: torsion gradient bound} and~\eqref{eq: matching lower bound gradient}
\begin{equation*}
 1\leq  \frac{|\partial\Omega|}{|\Omega|}\|\nabla u_\Omega\|_{L^\infty(\Omega)} < d\,,
\end{equation*}
therefore it suffices to prove
\begin{equation*}
  \frac{|\Omega|^{\frac{1}{d}}}{|\partial\Omega|^{\frac{1}{d-1}}} \lesssim_d \Bigl(\frac{r(\Omega)}{D(\Omega)}\Bigr)^{\frac{1}{d(d-1)}}\,.
\end{equation*}

By John's lemma~\cite{MR0030135} there exists an ellipsoid $E\subset \R^d$ such that $E$ is contained in $\Omega$ and the dilation of $E$ by a factor $d$ around its centre contains $\Omega$. Let $E$ be such an ellipsoid associated with $\Omega$ and denote by $r_1\leq r_2 \leq \ldots \leq r_d$ the lengths of the semi-axes of $E$. Note that $r_1$ and $r_d$ are comparable to $r(\Omega)$ and $D(\Omega)$, respectively. Moreover, since
\begin{equation*}
    |E|\sim_d \prod_{j=1}^d r_j \quad \mbox{and} \quad |\partial E|\sim_d \prod_{j=2}^d r_j
\end{equation*}  
the monotonicity of perimeter and volume under inclusion of convex sets implies
\begin{equation}\label{eq: isop. quotient eccentricity bound}
  \frac{|\Omega|^{\frac{1}{d}}}{|\partial\Omega|^{\frac{1}{d-1}}} \sim_d \frac{\prod_{j=1}^d r_j^\frac{1}{d}}{\prod_{j=2}^d r_j^{\frac{1}{d-1}}} = \prod_{j=2}^d \Bigl(\frac{r_1}{r_j}\Bigr)^{\frac{1}{d(d-1)}} \leq \Bigl(\frac{r_1}{r_d}\Bigr)^{\frac{1}{d(d-1)}} \sim_d \Bigl(\frac{r(\Omega)}{D(\Omega)}\Bigr)^{\frac{1}{d(d-1)}}
\end{equation}
as claimed. 

Since each step of the proof is sharp up to constants if $r_1 = \ldots = r_{d-1}$ the optimality of the exponent follows. This completes the proof of Theorem~\ref{thm: alpha gradient bound}.
\end{proof}

\begin{proof}[Proof of Corollary~\ref{cor: boundedness of maximizing sequence}]
  Let $\{\Omega_k\}_{k\geq 1}$ be as in the statement of the corollary. Since $|\Omega_k|=1$ we have that $r(\Omega_k)\leq \omega_d^{-1/d}$. Thus, by Theorem~\ref{thm: alpha gradient bound},
  \begin{equation*}
    |\partial\Omega_k|^{\frac{\alpha-1}{d-1}}\|\nabla u_{\Omega_k}\|_{L^\infty(\Omega_k)} \lesssim_{d,\alpha}  \Bigl(\frac{r(\Omega_k)}{D(\Omega_k)}\Bigr)^{\frac{d-\alpha}{d(d-1)}} \lesssim_{d, \alpha} D(\Omega)^{- \frac{d-\alpha}{d(d-1)}}
  \end{equation*}
  and by assumption
  \begin{align*}
    \liminf_{k\to \infty}|\partial\Omega_k|^{\frac{\alpha-1}{d-1}}&\|\nabla u_{\Omega_k}\|_{L^\infty(\Omega_k)} >0\,.
  \end{align*}
  Combining the above we find $D(\Omega_k)\lesssim_{d, \alpha} 1$. The existence of a convergent subsequence follows from the Blaschke selection theorem~\cite[Theorem~1.8.7]{Schneider1}.
\end{proof}

\medskip

\noindent{\bf Acknowledgements.} The author wishes to thank the anonymous referee for valuable comments and suggestions which significantly helped improve the quality of the manuscript.


\def\myarXiv#1#2{\href{http://arxiv.org/abs/#1}{\texttt{arXiv:#1\,[#2]}}}

\end{document}